\colorlet{revision}{black}
\newtheorem{dummy}{dummy}[section]
\newtheorem{theorem}[dummy]{Theorem}
\newtheorem{corollary}[dummy]{Corollary}
\newtheorem{proposition}[dummy]{Proposition}
\theoremstyle{definition}
\newtheorem{definition}[dummy]{Definition}
\newtheorem{remark}[dummy]{Remark}
\DeclareMathOperator{\Fun}{Fun}
\newcommand{\id}{\mathrm{id}}
\newcommand{\LH}{L^\mathrm{H}}
\newcommand{\Relcat}{\mathrm{RelCat}}
\newcommand{\sspc}{\mathrm{sSpc}}
\newcommand{\sset}{\mathrm{sSet}}
\newcommand{\ainf}{\mathrm{A_\infty}}
\newcommand{\dgcat}{\mathrm{DGCat}}
\newcommand{\ainfcat}{\mathrm{A_\infty Cat}}
\newcommand{\ainfalg}{\mathrm{A_\infty Alg}}
\newcommand{\wainf}{W^\ainf_\mathrm{qe}}
\newcommand{\wdg}{W^\mathrm{DG}_\mathrm{qe}}
\newcommand{\fib}{\mathrm{fib}}
\title[DG versus A-infinity]{Remarks on the equivalence between differential graded categories and A-infinity categories}
\author{James Pascaleff}
\address{James Pascaleff, University of Illinois at Urbana-Champaign}
\email{jpascale@illinois.edu}
\begin{document}
\maketitle

\begin{abstract}
  We show that the homotopy theories of differential graded categories and $\ainf$-categories over a field are equivalent at the $(\infty,1)$-categorical level. The results are corollaries of a theorem of Canonaco-Ornaghi-Stellari combined with general relationships between different versions of $(\infty,1)$-categories.
\end{abstract}

\section{Introduction}

The purpose of this note is to show that certain desirable propositions follow from a direct combination of published results. The propositions state in various forms a ``homotopy equivalence'' between the ``homotopy theory of differential graded (DG) categories'' and the ``homotopy theory of $\ainf$-categories.''

The equivalence between DG categories and $\ainf$-categories has been studied for several decades, starting no later than the thesis of Lef\`{e}vre-Hasegawa \cite{lefevre}. Without recounting the whole history, we shall begin our discussion with one of the more recent versions of this equivalence, proven by Canonaco-Ornaghi-Stellari \cite{COS}.

We fix a field $k$ over which our categories are linear. Let $\dgcat$ denote the category whose objects are small DG categories and whose morphisms are DG functors. Let $\ainfcat$ denote the category whose objects are small strictly unital $\ainf$-categories and whose morphisms are strictly unital $\ainf$-functors. Both $\dgcat$ and $\ainfcat$ are ordinary (rather than homotopy coherent) categories. They are not small categories, but all of the categorical constructions we will perform on them may be construed as happening within the framework of Grothendieck universes.

A DG category is, by definition, the same as an $\ainf$-category with vanishing higher operations. Likewise, a DG functor between DG categories is the same as an $\ainf$-functor with vanishing higher components. Thus there is a functor
\begin{equation}
  i : \dgcat \to \ainfcat.
\end{equation}

To a DG or $\ainf$-category $A$ we may associate a graded $k$-linear category $H(A)$ by replacing all hom-complexes by their cohomologies; if we retain only cohomology in degree zero, we obtain a $k$-linear category $H^{0}(A)$. A DG or $\ainf$-functor $F : A \to B$ induces functors $H(F) : H(A) \to H(B)$ and $H^{0}(F) : H^{0}(A) \to H^{0}(B)$.

A DG or $\ainf$-functor $F: A \to B$ is a \emph{quasi-equivalence} if $H(F)$ and $H^{0}(F)$ are both equivalences of categories; this is equivalent to the conjunction of the conditions that $F$ acts by quasi-isomorphisms on all hom-complexes and that $H^{0}(F)$ is essentially surjective. We denote by $\wdg$ the subcategory of $\dgcat$ with the same objects but with only the DG quasi-equivalences as morphisms; similarly $\wainf$ denotes the subcategory of $\ainfcat$ with the same objects but with only the $\ainf$-quasi-equivalences as morphisms.

We now state the results of Canonaco-Ornaghi-Stellari \cite{COS}. First is the construction of a functor
\begin{equation}
  U : \ainfcat \to \dgcat
\end{equation}
which could be called a \emph{rectification functor}. On objects, this functor takes an $\ainf$-category $A$, forms the bar construction $BA$, which is a ``DG cocategory'', and then forms the cobar construction $\Omega(BA)$ of that. The precise details are not necessary for our discussion, but only the following theorems.

\begin{theorem}[Canonaco-Ornaghi-Stellari \cite{COS}]
  \label{thm:cos-main}
  The functors $U : \ainfcat \rightleftarrows\dgcat : i$ form an adjunction such that
  \begin{enumerate}
  \item $U$ is left adjoint to $i$,
  \item $i$ takes $\wdg$ into $\wainf$,
  \item $U$ takes $\wainf$ into $\wdg$,
  \item all components of the unit $\eta : \id_{\ainfcat} \to iU$ belong to $\wainf$, and
  \item all components of the counit $\epsilon : Ui \to \id_{\dgcat}$ belong to $\wdg$.
  \end{enumerate}
\end{theorem}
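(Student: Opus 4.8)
This is essentially the content of \cite{COS}, which in practice one would cite as a black box; the following only sketches the shape of an argument. The functor $U$ is the composite of the \emph{bar} and \emph{cobar} constructions. From a strictly unital $\ainf$-category $A$ one first forms the reduced bar construction $BA$: a coaugmented conilpotent DG cocategory whose underlying graded cocategory is cofreely cogenerated by the shifted reduced hom-complexes $\bar{A}[1]$, and whose codifferential is the coderivation whose components assemble the operations $m_{k}$ (the reduction accounting for the strict units). One then applies the cobar construction $\Omega$, which to such a DG cocategory $C$ assigns the DG category $\Omega C$ with hom-complexes the (possibly completed) tensor algebras on $C[-1]$, the differential built from the internal differential of $C$, its cocomposition, and a correction reinstating strict units. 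Since $\Omega$ always produces honest (strictly associative, strictly unital) DG categories, $U=\Omega\circ B$ is a well-defined functor $\ainfcat\to\dgcat$, bijective on objects; a little care is needed about the completion in $\Omega$ so that the infinite sums occurring in the structure maps and in the adjunction below converge.

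For the adjunction (item (1)) the plan is to exhibit a natural chain of bijections
\[
\Hom_{\ainfcat}(A,\,iD)\;\cong\;\Hom_{\mathrm{DGCocat}}(BA,\,BD)\;\cong\;\mathrm{Tw}(BA,\,D)\;\cong\;\Hom_{\dgcat}(\Omega BA,\,D),
\]
where $D$ is a DG category, $BD=B(iD)$, and $\mathrm{Tw}(-,-)$ denotes twisting cochains. The first bijection is the very definition of $\ainf$-functors via the bar construction, specialized to a DG target; the last two are the standard bar--cobar/twisting-cochain yoga, i.e.\ the universal properties defining $B$ on DG cocategories and $\Omega$ on DG categories. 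Chasing $\id_{\Omega BA}$ and $\id_{BD}$ backwards through this chain identifies the unit $\eta_{A}\colon A\to iU(A)=i\,\Omega BA$ and the counit $\epsilon_{D}\colon U(iD)=\Omega BD\to D$ with the canonical bar--cobar resolution maps; naturality of everything is a diagram chase.

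Item (2) is immediate: $i$ alters neither the object sets nor the hom-complexes, whereas the notion of quasi-equivalence refers only to $H$ and $H^{0}$, so $i$ preserves \emph{and} reflects quasi-equivalences. The real content is that $\eta_{A}$ and $\epsilon_{D}$ are quasi-equivalences. Since $B$ and $\Omega$ are the identity on objects, both maps are bijective on objects, so the essential-surjectivity clause is automatic, and it remains to show they act by quasi-isomorphisms on hom-complexes. For this I would filter the hom-complex of $\Omega BA$ by a suitable length invariant (e.g.\ a syzygy degree built from the bar word-lengths and the number of cobar tensor factors), arranged so that on the associated graded the differential reduces to its linear ``Koszul'' part, coming from $m_{1}$ together with the part pairing the deconcatenation coproduct of $BA$ against the reassembly in $\Omega$. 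That associated-graded complex is a tensor product of copies of the reduced cobar--bar complex of the underlying chain complexes of $A$, which is contractible away from the bottom degree by the classical extra-degeneracy homotopy; a convergent spectral sequence then yields $H^{*}\Hom_{\Omega BA}(x,y)\cong H^{*}\Hom_{A}(x,y)$ compatibly with $\eta_{A}$, convergence being precisely where the conilpotence of $BA$ (or the completion built into $\Omega$) enters. The argument for $\epsilon_{D}$ is the same computation with $A$ replaced by $D$.

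Finally, items (3) and (5): (5) is the counit statement just discussed, and (3) is then formal. Given an $\ainf$-quasi-equivalence $F\colon A\to B$, the naturality square for $\eta$,
\[
\begin{array}{ccc}
A & \xrightarrow{\ \eta_{A}\ } & iU(A)\\
F\downarrow\quad & & \quad\downarrow iU(F)\\
B & \xrightarrow{\ \eta_{B}\ } & iU(B)
\end{array}
\]
has $\eta_{A}$, $\eta_{B}$ (item (4)) and $F$ quasi-equivalences, hence $iU(F)$ is a quasi-equivalence by two-out-of-three, and therefore $U(F)$ is one because $i$ reflects quasi-equivalences. I expect the main obstacle of the whole program to be the hom-complex computation for the resolution maps: choosing the filtration so that the associated-graded differential really is the linear cobar--bar differential, writing down the contracting homotopy, and verifying convergence of the spectral sequence, all while correctly propagating the many-object bookkeeping and the strict-unit corrections through both the bar and the cobar construction.
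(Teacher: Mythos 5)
Your proposal and the paper's proof are different in kind: the paper proves Theorem \ref{thm:cos-main} purely by citation, noting that items (1), (4), (5) are \cite[Proposition 2.1]{COS}, that item (2) is obvious, and that item (3) is extracted from the proof of \cite[Theorem 2.2]{COS}. You instead sketch the underlying mathematics, and your sketch does track the actual construction in \cite{COS}: $U=\Omega\circ B$, with the adjunction mediated by twisting cochains and the universal properties of bar and cobar. The purely formal parts of your argument are correct and are exactly what the paper's citations point to: item (2) holds because $i$ changes neither hom-complexes nor $H$, $H^{0}$ (so $i$ preserves \emph{and} reflects quasi-equivalences), and your derivation of item (3) from items (2) and (4) via the naturality square for $\eta$ and the two-out-of-three property of quasi-equivalences is precisely the argument the paper alludes to when it says (3) is ``demonstrated in the proof of Theorem 2.2.'' What your sketch does not supply is the one piece of genuine content, namely that $\eta_{A}$ and $\epsilon_{D}$ act by quasi-isomorphisms on hom-complexes: you correctly identify the standard word-length filtration and spectral-sequence strategy and the attendant convergence issues, but you do not carry it out, and you say so yourself. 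As a blind reconstruction of the shape of the Canonaco--Ornaghi--Stellari proof this is accurate; as a self-contained proof it defers the key computation, just as the paper defers everything to the reference. One minor quibble: the cobar construction applied to the conilpotent DG cocategory $BA$ uses the uncompleted tensor construction---conilpotence is what makes the relevant sums finite---so the ``possibly completed'' hedge is misplaced there, although convergence does genuinely matter for the spectral sequence you invoke.
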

\begin{proof}
  All of these statements are contained in \cite[Proposition 2.1]{COS}, except for the statement that $i$ takes $\wdg$ into $\wainf$, which is obvious, and the statement that $U$ takes $\wainf$ into $\wdg$, which is demonstrated in the proof of \cite[Theorem 2.2]{COS}.
\end{proof}

Consider now the localizations $\dgcat[(\wdg)^{-1}]$ and $\ainfcat[(\wainf)^{-1}]$ formed by inverting the quasi-equivalences---these are the \emph{homotopy categories} of DG categories and $\ainf$-categories respectively.
\begin{theorem}[\cite{COS}, Theorem 2.2]
  \label{thm:1cat-equiv}
  The functors $U$ and $i$ induce mutually quasi-inverse equivalences of homotopy categories
  \begin{equation}
    \ainfcat[(\wainf)^{-1}] \rightleftarrows \dgcat[(\wdg)^{-1}].
  \end{equation}
\end{theorem}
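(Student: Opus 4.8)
The plan is to push Theorem~\ref{thm:cos-main} through the two localization functors, using nothing beyond the universal property of localization of ordinary categories. Write $\gamma_{\mathrm{A}} : \ainfcat \to \ainfcat[(\wainf)^{-1}]$ and $\gamma_{\mathrm{D}} : \dgcat \to \dgcat[(\wdg)^{-1}]$ for the localization functors, chosen (as in the Gabriel--Zisman construction) to be the identity on objects.

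First I would descend the functors. By part~(2) of Theorem~\ref{thm:cos-main} the composite $\gamma_{\mathrm{A}} \circ i$ sends every morphism of $\wdg$ to an isomorphism, so it factors uniquely as $\bar\imath \circ \gamma_{\mathrm{D}}$ for a functor $\bar\imath : \dgcat[(\wdg)^{-1}] \to \ainfcat[(\wainf)^{-1}]$; symmetrically, part~(3) produces $\bar U : \ainfcat[(\wainf)^{-1}] \to \dgcat[(\wdg)^{-1}]$ with $\bar U \circ \gamma_{\mathrm{A}} = \gamma_{\mathrm{D}} \circ U$.

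Next I would descend the unit and counit. Since $U$ sends $\wainf$ into $\wdg$ and $i$ sends $\wdg$ into $\wainf$, the endofunctor $iU$ of $\ainfcat$ preserves $\wainf$, as does $\id_{\ainfcat}$; I claim the natural transformation $\eta : \id_{\ainfcat} \to iU$ therefore induces a natural transformation $\bar\eta : \id \to \bar\imath\,\bar U$ of endofunctors of $\ainfcat[(\wainf)^{-1}]$. Because $\gamma_{\mathrm A}$ is bijective on objects, one defines $\bar\eta$ to have the component $\gamma_{\mathrm A}(\eta_A)$ at each object $A$; naturality need only be verified against morphisms of the form $\gamma_{\mathrm A}(f)$, where it follows from naturality of $\eta$, and against formal inverses $\gamma_{\mathrm A}(w)^{-1}$ with $w\in\wainf$, where it follows by inverting the corresponding naturality square. (Equivalently, one invokes that Gabriel--Zisman localization is a $2$-functor.) In the same way $\epsilon : Ui \to \id_{\dgcat}$ descends to $\bar\epsilon : \bar U\,\bar\imath \to \id$ on $\dgcat[(\wdg)^{-1}]$.

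Finally, by parts~(4) and~(5) each component $\eta_A$ lies in $\wainf$ and each component $\epsilon_B$ lies in $\wdg$, so $\gamma_{\mathrm A}(\eta_A)$ and $\gamma_{\mathrm D}(\epsilon_B)$ are isomorphisms; hence $\bar\eta$ and $\bar\epsilon$ are natural isomorphisms, and $\bar\imath$ and $\bar U$ are mutually quasi-inverse equivalences. (The triangle identities for $(\eta,\epsilon)$ descend as well, so one even obtains an adjoint equivalence, but that is not required for the statement.) The only non-formal ingredient is Theorem~\ref{thm:cos-main}; the one step demanding a little care is the descent of $\eta$ and $\epsilon$ along the localizations, since a priori a natural transformation between localization-inverting functors must be checked against the adjoined inverses of the weak equivalences before it is known to pass to the quotient.
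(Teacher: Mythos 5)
Your argument is correct and is exactly the formal derivation that the paper alludes to when it remarks that Theorem~\ref{thm:1cat-equiv} ``follows directly from the formal properties stated in Theorem~\ref{thm:cos-main}'' (the paper itself simply cites \cite[Theorem 2.2]{COS} rather than writing out the descent of the functors, unit, and counit along the localizations). Your care about checking naturality of $\bar\eta$ and $\bar\epsilon$ against the adjoined formal inverses is a legitimate point that the standard formal argument does require.
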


Theorem \ref{thm:1cat-equiv} follows directly from the formal properties stated in Theorem \ref{thm:cos-main}. Theorem \ref{thm:1cat-equiv} states that ``the homotopy theory of DG categories'' is equivalent to ``the homotopy theory of $\ainf$-categories'' at the 1-categorical level: the homotopy categories are equivalent as ordinary categories.

\textbf{One may then ask whether ``the homotopy theory of DG categories'' is equivalent to ``the homotopy theory of $\ainf$-categories'' at the $(\infty,1)$-categorical level. We shall show both that the answer is affirmative, and that this already follows from Theorem \ref{thm:cos-main} without any further work involving DG or $\ainf$-categories.} That is to say, Theorem \ref{thm:cos-main} implies not only Theorem \ref{thm:1cat-equiv}, but also an equivalence of homotopy theories at the apparently stronger $(\infty,1)$-categorical level. We give several precise formulations of this result. They are Corollaries \ref{cor:dk-equiv}, \ref{cor:bk-equiv}, \ref{cor:css-equiv}, \ref{cor:dg-vs-ainf-quasicat}, \ref{cor:coherent-nerve-equiv}, and \ref{cor:qcat-localization-equiv}.

\subsection{Context}

The results of this note are presumably ``folklore'' or ``well-known to experts.'' On the other hand, I was not able to find any reference for these results, so I went ahead and put the pieces of the puzzle together myself. The main content of this note is the observation that a certain kind of adjunction between relative categories (called a ``Dwyer-Kan adjunction of relative categories'' below) implies an equivalence between their localizations at the $(\infty,1)$-categorical level.

I have not attempted to survey all of the literature that touches on the questions raised here, of which there is a great deal, both about DG and $\ainf$-categories, and about relationships between different versions of $(\infty,1)$-categories. Nevertheless, I should note that the article of Vallette \cite{vallette} was very helpful. To relate different versions of $(\infty,1)$-categories, I have attempted to cite the original sources for the various functors that we use, but the overall picture that emerges is essentially the same as the one presented by Hinich \cite{hinich}.

One motivation for this note was a 2020 preprint of Oh-Tanaka.\footnote{Yong-Geun Oh and Hiro Lee Tanaka. A-infinity-categories, their infinity-category, and their localizations. arXiv:2003.05806.} I had wished to cite Theorem 1.1 from that preprint, but unfortunately it was withdrawn. Note that Oh and Tanaka work over a general commutative base ring. The results of this note show that---\emph{in the special case where the base ring is a field}---Theorem 1.1 from that preprint is in fact true. However, the precise form of the equivalence appears to be different from the one proposed by Oh-Tanaka. Tanaka has also informed me of ongoing work to recover Theorem 1.1 over a general commutative base ring.

\subsection{Acknowledgements}
I wish to thank Charles Rezk and Doron Grossman-Naples for very helpful conversations, and Hiro Lee Tanaka for his comments on this note. \textcolor{revision}{I also thank the referee for their comments.}

\section{Dywer-Kan equivalence}

In their works \cite{dk-localization, dk-calculating}, Dwyer and Kan studied a simplicially enriched version of the localization of category at a subcategory. We shall isolate the concepts and results that we need.

By definition, a \emph{relative category} is a pair $(C,W)$ consisting of a category $C$ and a subcategory $W$, subject only to the condition that $W$ contain all objects (and hence all identity morphisms) of $C$. We refer to the morphisms in $W$ as \emph{weak equivalences}.

A \emph{relative functor} $F : (C_{1},W_{1}) \to (C_{2},W_{2})$ is an ordinary functor $F: C_{1} \to C_{2}$ such that $F(W_{1}) \subseteq W_{2}$.

Recall that an adjunction $L \dashv R$ consists of two functors $L : C \to D$, $R : D \to C$ and natural transformations $\eta : \id_{C} \to RL$ and $\epsilon : LR \to \id_{D}$ called the unit and counit respectively, satisfying certain relations known as triangle identities.

An adjoint equivalence is an adjunction such that both the unit and counit are natural isomorphisms. This condition obviously implies that $L$ and $R$ are equivalences of categories.

We now isolate an important concept from \cite{dk-calculating}; I am not aware of a standard term for this concept.

\begin{definition}
  \label{defn:dk-adjunction}
  Let $(C_{1},W_{1})$ and $(C_{2},W_{2})$ be relative categories, and let $(L,R,\eta,\epsilon)$ be an adjunction, where $L : C_{1} \to C_{2}$ and $R : C_{2} \to C_{1}$. We say that $(L,R,\eta,\epsilon)$ is a \emph{Dwyer-Kan adjunction of relative categories} if the following conditions hold:
  \begin{enumerate}
  \item $L(W_{1}) \subseteq W_{2}$ and $R(W_{2})\subseteq W_{1}$, so that $L$ and $R$ are relative functors,
  \item for every object $X$ in $C_{1}$, the component of the unit $\eta_{X} : X \to RLX$ is in $W_{1}$, and
  \item for every object $Y$ in $C_{2}$, the component of the counit $\epsilon_{Y} : LRY \to Y$ is in $W_{2}$.
  \end{enumerate}
\end{definition}

In the special case where $W_{i} = C_{i}$ for $i = 1,2$, a Dwyer-Kan adjunction of relative categories is just an adjunction. 
In the special case where $W_{i}$ consists precisely of the isomorphisms in $C_{i}$, a Dwyer-Kan adjunction of relative categories is the same as an adjoint equivalence.  Alternately, a Dwyer-Kan adjunction of relative categories is ``the same as'' an adjoint equivalence where the phrase ``natural isomorphism'' is replaced by ``natural transformation in $W$.''

\begin{theorem}[Restatement of \ref{thm:cos-main}]
  The functors $U : (\ainfcat,\wainf) \rightleftarrows(\dgcat,\wdg) : i$ form a Dwyer-Kan adjunction of relative categories.
\end{theorem}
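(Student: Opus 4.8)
The plan is essentially to observe that this ``theorem'' is a tautological repackaging of Theorem~\ref{thm:cos-main}, so the proof consists of matching up the clauses of Definition~\ref{defn:dk-adjunction} with the clauses of Theorem~\ref{thm:cos-main} under the dictionary $(C_1,W_1) = (\ainfcat,\wainf)$, $(C_2,W_2) = (\dgcat,\wdg)$, $L = U$, $R = i$. The only point requiring the slightest attention is bookkeeping of variances: in Definition~\ref{defn:dk-adjunction} the left adjoint $L$ goes $C_1 \to C_2$, and here the left adjoint is $U : \ainfcat \to \dgcat$ (by clause (1) of Theorem~\ref{thm:cos-main}), so indeed $\ainfcat$ plays the role of $C_1$ and $\dgcat$ that of $C_2$. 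With this identification, condition (1) of Definition~\ref{defn:dk-adjunction}, namely $L(W_1)\subseteq W_2$ and $R(W_2)\subseteq W_1$, is precisely the conjunction of clauses (3) and (2) of Theorem~\ref{thm:cos-main}: $U$ takes $\wainf$ into $\wdg$, and $i$ takes $\wdg$ into $\wainf$.

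Next I would match the unit and counit conditions. Condition (2) of Definition~\ref{defn:dk-adjunction} asks that every component of $\eta_X : X \to RLX = iUX$ lie in $W_1 = \wainf$; this is exactly clause (4) of Theorem~\ref{thm:cos-main}. Condition (3) asks that every component of $\epsilon_Y : LRY = UiY \to Y$ lie in $W_2 = \wdg$; this is exactly clause (5). The existence of the adjunction $(U, i, \eta, \epsilon)$ with $U \dashv i$, together with the triangle identities, is asserted by clause (1). Hence all three conditions of Definition~\ref{defn:dk-adjunction} hold, and $(U, i, \eta, \epsilon)$ is a Dwyer-Kan adjunction of relative categories.

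There is no real obstacle here; the content is entirely in Theorem~\ref{thm:cos-main}, and the present statement merely records that Theorem~\ref{thm:cos-main} says exactly what Definition~\ref{defn:dk-adjunction} requires. The one thing worth being careful about---and the ``main obstacle'' insofar as there is one---is not to get the variance backwards: because $U$ is the \emph{left} adjoint, the relative category $(\ainfcat,\wainf)$ must be taken as the source $(C_1,W_1)$ in Definition~\ref{defn:dk-adjunction}, even though in the eventual statements about localizations one often thinks of passing from $\ainf$-categories ``up'' to DG categories. Once the roles are fixed correctly, the verification is immediate, and I would simply write: this is a restatement of Theorem~\ref{thm:cos-main} in the language of Definition~\ref{defn:dk-adjunction}, with clauses (1)--(5) of the former supplying, respectively, the adjunction and conditions (1) (twice) and (2), (3) of the latter.
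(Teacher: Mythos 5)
Your proposal is correct and coincides with what the paper does: the statement is offered as a direct restatement of Theorem \ref{thm:cos-main}, and the only content is the clause-by-clause matching with Definition \ref{defn:dk-adjunction} under $(C_1,W_1)=(\ainfcat,\wainf)$, $(C_2,W_2)=(\dgcat,\wdg)$, $L=U$, $R=i$, exactly as you carry out. Your attention to the variance (the left adjoint $U$ forcing $\ainfcat$ to play the role of $C_1$) is the right point to check, and your dictionary between clauses (1)--(5) and the three conditions of the definition is accurate.
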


Next, given a relative category $(C,W)$, we may form the localization $C[W^{-1}]$ as an ordinary category. In 1980, Dwyer and Kan \cite{dk-localization, dk-calculating} gave two constructions of simplicially enriched categories that enhance $C[W^{-1}]$ to what contemporary mathematicians consider to be a kind of $(\infty,1)$-category. In this sense, a relative category is perhaps the most elementary data structure that presents an $(\infty,1)$-category.

The version that we shall use is the so-called \emph{hammock localization}. Given a relative category $(C,W)$, this construction produces a category enriched over simplicial sets $\LH(C,W)$. The objects of $\LH(C,W)$ are the same as those of $C$, and for each pair of objects $X,Y$ there is a morphism simplicial set $\LH(C,W)(X,Y)$ whose simplicies correspond to certain diagrams called hammocks; the detailed definition will not concern us.

The simpicial sets $\LH(C,W)(X,Y)$ are to be regarded as spaces and as such they have homotopy groups. The category $\pi_{0}\LH(C,W)$ is obtained by replacing each morphism space by its set of components. According to \cite[Proposition 3.1]{dk-calculating}, there is a canonical isomorphism of categories
\begin{equation}
  \pi_{0}\LH(C,W) \cong C[W^{-1}].
\end{equation}

An important property of hammock localization is that it is manifestly functorial with respect to relative functors. That is, if $F : (C_{1},W_{1}) \to (C_{2},W_{2})$ is a relative functor, then there is an induced simplicially enriched functor $\LH F : \LH(C_{1},W_{1}) \to \LH(C_{2},W_{2})$.

We recall the appropriate notion of ``weak homotopy equivalence'' for simplicially enriched categories. Recall that a \emph{weak homotopy equivalence of simplicial sets} is a map that, after passing to geometric realizations, induces a bijection on $\pi_{0}$ and an isomorphism on all homotopy groups with all possible basepoints.
%\footnote{When the source and target are Kan complexes, the step of passing to geometric realizations may be omitted, and one works with simplicial homotopy groups.}
\begin{definition}
  Let $D_{1}$ and $D_{2}$ be simplicially enriched categories, and let $F : D_{1} \to D_{2}$ be a simplicially enriched functor. Then $F$ is a \emph{Dwyer-Kan equivalence} if the following conditions hold:
  \begin{enumerate}
  \item The induced functor $\pi_{0}F : \pi_{0}D_{1} \to \pi_{0}D_{2}$ is an equivalence of categories, and
  \item for every pair of objects $X,Y$ in $D_{1}$, the map of simplicial sets $F : D_{1}(X,Y) \to D_{2}(FX,FY)$ is a weak homotopy equivalence of simplicial sets. 
  \end{enumerate}
\end{definition}

The following proposition asserts a relationship between Dwyer-Kan adjunctions of relative categories and Dwyer-Kan equivalences of localizations.

\begin{proposition}
  \label{prop:dk-adjunction-implies-dk-equivalence}
  Let $(C_{1},W_{1})$ and $(C_{2},W_{2})$ be relative categories, and let $(L,R,\eta,\epsilon)$ be a Dwyer-Kan adjunction of relative categories, where $L : C_{1} \to C_{2}$ and $R: C_{2} \to C_{1}$. Then
  \begin{equation*}
    \LH(L) : \LH(C_{1},W_{1}) \to \LH(C_{2},W_{2}) \quad \text{and} \quad \LH(R) :  \LH(C_{2},W_{2}) \to \LH(C_{1},W_{1})
  \end{equation*}
  are Dwyer-Kan equivalences of simplicially enriched categories. Also,
  \begin{equation*}
    \pi_{0}\LH(L) : C_{1}[W_{1}^{-1}] \to C_{2}[W_{2}^{-1}]\quad \text{and} \quad \pi_{0}\LH(R) : C_{2}[W_{2}^{-1}] \to C_{1}[W_{1}^{-1}]
  \end{equation*}
  are mutually quasi-inverse equivalences of categories.
\end{proposition}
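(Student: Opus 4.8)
The plan is to verify directly that $\LH(L)$ and $\LH(R)$ satisfy the two conditions defining a Dwyer--Kan equivalence; the assertion about $\pi_{0}$ will then come out along the way. I would use two properties of the hammock localization. The first is recalled above: $\LH$ is functorial on relative functors, so $\LH(\id)=\id$ and $\LH(G\circ F)=\LH(G)\circ\LH(F)$, and $\pi_{0}\LH(C,W)\cong C[W^{-1}]$ functorially in $(C,W)$. The second is a homotopy-invariance property which I would extract from the combinatorics of hammocks (it is in substance due to Dwyer--Kan \cite{dk-localization, dk-calculating}): \emph{(i)} for a weak equivalence $w\colon Y\to Y'$ in $(C,W)$, postcomposition $\LH(C,W)(X,Y)\to\LH(C,W)(X,Y')$ and, dually, precomposition with a weak equivalence are weak homotopy equivalences; and \emph{(ii)} if $F_{0},F_{1}\colon(C_{1},W_{1})\to(C_{2},W_{2})$ are relative functors and $\alpha\colon F_{0}\Rightarrow F_{1}$ is a natural transformation with every component $\alpha_{Z}$ in $W_{2}$, then for every $X,Y$ the two maps $(\alpha_{Y})_{*}\circ\LH(F_{0})_{X,Y}$ and $(\alpha_{X})^{*}\circ\LH(F_{1})_{X,Y}$ from $\LH(C_{1},W_{1})(X,Y)$ to $\LH(C_{2},W_{2})(F_{0}X,F_{1}Y)$ are homotopic. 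Applying $\pi_{0}$, statement \emph{(ii)} says that the images of the $\alpha_{Z}$ form a natural isomorphism $\pi_{0}\LH(F_{0})\cong\pi_{0}\LH(F_{1})$ of functors $C_{1}[W_{1}^{-1}]\to C_{2}[W_{2}^{-1}]$.

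Granting this, I would first settle $\pi_{0}$. The unit $\eta\colon\id_{C_{1}}\Rightarrow RL$ has all components in $W_{1}$, so \emph{(ii)} yields a natural isomorphism $\id_{C_{1}[W_{1}^{-1}]}=\pi_{0}\LH(\id_{C_{1}})\cong\pi_{0}\LH(RL)=\pi_{0}\LH(R)\circ\pi_{0}\LH(L)$; symmetrically the counit $\epsilon$ gives $\pi_{0}\LH(L)\circ\pi_{0}\LH(R)\cong\id_{C_{2}[W_{2}^{-1}]}$. Hence $\pi_{0}\LH(L)$ and $\pi_{0}\LH(R)$ are mutually quasi-inverse equivalences of categories, which is the final assertion of the proposition; in particular condition (1) of Dwyer--Kan equivalence holds for both $\LH(L)$ and $\LH(R)$.

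For condition (2), I would first check that $\LH(RL)$ and $\LH(LR)$ are Dwyer--Kan equivalences. Condition (1) for them was just observed. For condition (2) of $\LH(RL)$: applying \emph{(ii)} to $\eta$ and using $\LH(\id_{C_{1}})=\id$ gives, for each $X,Y$, a homotopy between $(\eta_{Y})_{*}$ and $(\eta_{X})^{*}\circ\LH(RL)_{X,Y}$ as maps $\LH(C_{1},W_{1})(X,Y)\to\LH(C_{1},W_{1})(X,RLY)$; by \emph{(i)} both $(\eta_{Y})_{*}$ and $(\eta_{X})^{*}$ are weak homotopy equivalences, so $(\eta_{X})^{*}\circ\LH(RL)_{X,Y}$ is one, and therefore $\LH(RL)_{X,Y}$ is one by the $2$-out-of-$3$ property. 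The same argument with $\epsilon$ shows $\LH(LR)$ is a Dwyer--Kan equivalence. Now $\LH(R)\circ\LH(L)=\LH(RL)$ and $\LH(L)\circ\LH(R)=\LH(LR)$ are Dwyer--Kan equivalences; feeding this into the $2$-out-of-$6$ property of weak homotopy equivalences of simplicial sets, applied one pair of objects at a time (this instance reduces, via $2$-out-of-$3$, to the remark that a map admitting both a homotopy section and a homotopy retraction is a weak homotopy equivalence), shows that $\LH(L)_{X,Y}$ is a weak homotopy equivalence for all $X,Y$, i.e.\ $\LH(L)$ satisfies condition (2); interchanging the roles of $L$ and $R$ gives the same for $\LH(R)$. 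Thus $\LH(L)$ and $\LH(R)$ are Dwyer--Kan equivalences.

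The only genuinely non-formal input is the homotopy-invariance property \emph{(i)}--\emph{(ii)}, and I expect this, rather than the assembly above, to be where care is needed. For \emph{(i)}, the point is that the image in $\LH(C,W)$ of a weak equivalence $w$ is an equivalence---a length-one backwards hammock is a two-sided homotopy inverse---so that composition with it is a simplicial homotopy equivalence, hence a weak homotopy equivalence. For \emph{(ii)}, a hammock $h$ from $X$ to $Y$ in $C_{1}$ produces, on applying $\alpha$ objectwise, a ``ladder'' whose rungs are components of $\alpha$ (which lie in $W_{2}$, so the ladder is itself a legitimate hammock); this ladder exhibits a $1$-simplex of $\LH(C_{2},W_{2})(F_{0}X,F_{1}Y)$ joining $\alpha_{Y}\cdot\LH(F_{0})(h)$ to $\LH(F_{1})(h)\cdot\alpha_{X}$, naturally enough in $h$ to assemble into the desired homotopy. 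I would pin down the exact references in \cite{dk-localization, dk-calculating} or else supply these short arguments; everything downstream is bookkeeping with functoriality and $2$-out-of-$n$.
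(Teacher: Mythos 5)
Your argument is correct, but it takes a more self-contained route than the paper, whose entire proof is a one-line citation: the proposition is exactly \cite[Corollary 3.6]{dk-calculating} restated in the terminology of ``Dwyer--Kan adjunctions of relative categories.'' What you have written is essentially a reconstruction of Dwyer and Kan's own proof of that corollary: your homotopy-invariance inputs \emph{(i)} and \emph{(ii)} are precisely the statements of \cite{dk-calculating} preceding Corollary 3.6 (composition with a weak equivalence becomes a homotopy equivalence on hammock mapping spaces, since the image of $w\in W$ in $\LH(C,W)$ is invertible up to homotopy; and a natural transformation with components in $W$ induces a homotopy in exactly the ``square commutes up to homotopy'' form you give). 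The remaining assembly --- the $\pi_{0}$ statement obtained by applying $\pi_{0}$ to \emph{(ii)}, the two-out-of-three argument showing each of $\LH(RL)_{X,Y}$ and $\LH(LR)_{X,Y}$ is a weak homotopy equivalence, and the two-out-of-six step extracting the same for $\LH(L)_{X,Y}$ and $\LH(R)_{X,Y}$ --- is sound and symmetric in $L$ and $R$ as you claim. The one point to tidy is your parenthetical justification of two-out-of-six: for weak homotopy equivalences (as opposed to simplicial homotopy equivalences) the clean argument is that they are exactly the maps becoming isomorphisms in the homotopy category of spaces, where two-out-of-six holds for isomorphisms. What the bare citation buys the paper is brevity; what your version buys is the explicit identification of \emph{(i)}--\emph{(ii)} as the only non-formal content, which is an accurate assessment.
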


\begin{proof}
  This is a restatement in our terminology of \cite[Corollary 3.6]{dk-calculating}.
\end{proof}

We remark that $\pi_{0}\LH(L)$ and $\pi_{0}\LH(R)$ actually fit into an adjoint equivalence of the ordinary localizations, with unit and counit induced from $\eta$ and $\epsilon$.
% \textcolor{revision}{This adjunction lifts to the simplicially enriched level in the following sense: the unit $\eta$ and counit $\epsilon$ are simplicially enriched natural transformations up to homotopy \cite[Proposition 3.5]{dk-calculating}, and for any objects $X$ of $C_{1}$ and $Y$ of $C_{2}$, the map
%   \begin{equation*}
%     \LH(C_{2},W_{2})(LX,Y) \to \LH(C_{1},W_{1})(X,RY)
%   \end{equation*}
%   induced by the unit $\eta$ is a weak homotopy equivalence of simplicial sets, as is the map in the other direction induced by the counit $\epsilon$. This follows from the results already cited and \cite[Proposition 3.3]{dk-calculating}. Letting $\cH$ denote the homotopy category of simplicial sets, and noting that a simplicially enriched category has an associated a $\cH$-enriched category, we see that $\LH(L)$ and $\LH(R)$ fit into an $\cH$-enriched adjoint equivalence.}

According to Bergner \cite{bergner-model-structure}, there is a model category structure on the category of simplicially enriched categories such that the weak equivalences are precisely the Dwyer-Kan equivalences. So Proposition \ref{prop:dk-adjunction-implies-dk-equivalence} states that $\LH(C_{1},W_{1})$ and $\LH(C_{2},W_{2})$ are weakly equivalent in the Bergner model structure.

\begin{corollary}
  \label{cor:dk-equiv}
  The functors
  \begin{equation}
    \LH(U) : \LH(\ainfcat,\wainf) \rightleftarrows \LH(\dgcat,\wdg) : \LH(i)
  \end{equation}
  are both Dwyer-Kan equivalences of simplicially enriched categories. That is to say, $\LH(U)$ and $\LH(i)$ are both weak equivalences for the Bergner model structure on the category of simplicially enriched categories.
\end{corollary}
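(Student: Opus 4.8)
The plan is to deduce this formally from Proposition~\ref{prop:dk-adjunction-implies-dk-equivalence}; no further argument specific to DG or $\ainf$-categories is required, and the only real content is matching up the hypotheses correctly. I would take $(C_1,W_1) = (\ainfcat,\wainf)$ and $(C_2,W_2) = (\dgcat,\wdg)$, with $L = U : C_1 \to C_2$ and $R = i : C_2 \to C_1$. Part~(1) of Theorem~\ref{thm:cos-main} says that $U$ is left adjoint to $i$, so $(U,i,\eta,\epsilon)$ is an adjunction in the sense required by Definition~\ref{defn:dk-adjunction}.

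Next I would check the three conditions of Definition~\ref{defn:dk-adjunction}. Condition~(1), that $U(\wainf)\subseteq\wdg$ and $i(\wdg)\subseteq\wainf$, is parts~(3) and~(2) of Theorem~\ref{thm:cos-main}. Condition~(2), that every unit component $\eta_A : A \to iUA$ lies in $\wainf$, is part~(4). Condition~(3), that every counit component $\epsilon_B : UiB \to B$ lies in $\wdg$, is part~(5). Thus $(U,i,\eta,\epsilon)$ is a Dwyer-Kan adjunction of relative categories --- which is exactly the restatement of Theorem~\ref{thm:cos-main} recorded just above the corollary. Applying Proposition~\ref{prop:dk-adjunction-implies-dk-equivalence} to this data then yields directly that $\LH(U)$ and $\LH(i)$ are Dwyer-Kan equivalences of simplicially enriched categories. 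For the final sentence, I would invoke the fact that the weak equivalences of the Bergner model structure \cite{bergner-model-structure} are \emph{by definition} the Dwyer-Kan equivalences, so the two assertions are literally the same statement.

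The place where one could go wrong is bookkeeping rather than mathematics: one must keep the left/right roles of $U$ and $i$ straight, so that the unit and counit conditions of Definition~\ref{defn:dk-adjunction} are matched to the correct components of Theorem~\ref{thm:cos-main}, and one must remember that the designated classes of weak equivalences are $\wainf$ and $\wdg$ rather than merely the isomorphisms --- otherwise Proposition~\ref{prop:dk-adjunction-implies-dk-equivalence} would only be recovering an equivalence of ordinary localizations. Beyond that there is no obstacle; all of the substantive work has already been absorbed into Theorem~\ref{thm:cos-main} and Proposition~\ref{prop:dk-adjunction-implies-dk-equivalence}.
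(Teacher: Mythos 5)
Your proposal is correct and is exactly the argument the paper intends: the corollary is an immediate application of Proposition~\ref{prop:dk-adjunction-implies-dk-equivalence} to the Dwyer-Kan adjunction of relative categories furnished by the restatement of Theorem~\ref{thm:cos-main}, with the Bergner weak equivalences being by definition the Dwyer-Kan equivalences. Your matching of the five parts of Theorem~\ref{thm:cos-main} to the conditions of Definition~\ref{defn:dk-adjunction} is accurate.
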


\section{The Barwick-Kan model structure on $\Relcat$}

Let $\Relcat$ denote the category whose objects are small relative categories and whose morphisms are relative functors. Instead of taking simplicial localization of relative categories as above, we may regard relative categories themselves as a kind of $(\infty,1)$-categories. In this vein Barwick-Kan \cite{barwick-kan-equiv,barwick-kan-model} defined a model category structure directly on $\Relcat$. This model category structure is designed so as to have a specific relationship to Rezk's theory of complete Segal spaces \cite{rezk}; we denote by $\sspc$ the category of simplicial spaces (that is, bisimplicial sets) on which Rezk's complete Segal space model structure is defined.

\begin{theorem}[\cite{barwick-kan-equiv,barwick-kan-model}]
  \label{thm:bk-main}
  There is a model structure on $\Relcat$, called the \emph{Barwick-Kan model structure}, with the following properties:
  \begin{enumerate}
  \item there is an adjunction $K_{\xi} : \sspc \rightleftarrows \Relcat: N_{\xi}$ where $K_{\xi}$ is left adjoint to $N_{\xi}$, and this adjunction is a Quillen equivalence between the Barwick-Kan model structure on $\Relcat$ and Rezk's complete Segal space model structure on $\sspc$,
  \item a morphism in $\Relcat$ is a weak equivalence if and only if its image under $N_{\xi}$ is a weak equivalence in $\sspc$, and
  \item a morphism $F : (C_{1},W_{1}) \to (C_{2},W_{2})$ is a weak equivalence if and only if $\LH F : \LH(C_{1},W_{1})\to \LH(C_{2},W_{2})$ is a Dwyer-Kan equivalence of simplicially enriched categories.
  \end{enumerate}
\end{theorem}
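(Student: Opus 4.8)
The plan is to obtain the model structure on $\Relcat$ by \emph{transfer} from Rezk's complete Segal space model structure on $\sspc$ along the adjunction $K_{\xi} : \sspc \rightleftarrows \Relcat : N_{\xi}$ of item (1). Here $N_{\xi}$ is a ``relative nerve'': it sends a relative category $(C,W)$ to the bisimplicial set whose $(p,q)$-simplices are the relative functors from a standard small relative category $\xi[p,q]$ (a subdivided grid equipped with a prescribed class of weak equivalences) into $(C,W)$, and $K_{\xi}$ is its left adjoint, computed as a coend. One \emph{defines} a morphism $F$ in $\Relcat$ to be a weak equivalence, resp.\ a fibration, exactly when $N_{\xi}F$ is one in $\sspc$; cofibrations are then forced by the left lifting property.

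One then carries out the standard verification that this data forms a model structure. First, $\Relcat$ is complete and cocomplete: limits are created on underlying categories with the evident subcategory of weak equivalences, and colimits exist but require care, since the subcategory of weak equivalences of a colimit must be generated by the images of those of the pieces. Next one applies Kan's transfer criterion: exhibit sets $I$ and $J$ of generating cofibrations and trivial cofibrations --- essentially $K_{\xi}$ applied to Rezk's generators, together with a few extra maps of small relative categories that repair the failure of $K_{\xi}$ to be fully faithful --- and verify (a) that the domains are small, so the small object argument runs, and (b) the acyclicity condition that every map with the right lifting property against $I$ is a weak equivalence. The two-out-of-three and retract axioms are then automatic from the corresponding facts in $\sspc$.

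The \textbf{main obstacle} is precisely this transfer/acyclicity step. The functor $N_{\xi}$ is not as well behaved as an ordinary nerve --- it does not preserve all colimits, and $\Relcat$ admits no model-categorically innocuous description of its colimits --- so one cannot merely invoke a black-box transfer theorem. The resolution is a delicate combinatorial study of the relative categories $\xi[p,q]$ and the ``relative arrows'' built from them: one shows by hand that the putative generating trivial cofibrations are genuine weak equivalences, and that pushouts and transfinite composites of them remain weak equivalences, which is where the careful bookkeeping of the subcategories $W$ is essential. I would follow the analysis of \cite{barwick-kan-model}; this is the heart of the argument.

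Finally one deduces the three listed properties. Property (2) holds by construction. For property (1), the Quillen equivalence, it suffices to check that the derived unit and counit are weak equivalences; since $N_{\xi}$ detects and reflects weak equivalences by fiat, this reduces to a statement in $\sspc$, namely that $N_{\xi}K_{\xi}$ is homotopically idempotent on the relevant objects, which again follows from the combinatorial input (cf.\ \cite{barwick-kan-equiv}). For property (3), one compares both sides with the Bergner model structure \cite{bergner-model-structure}: hammock localization $\LH$ is, up to a natural Dwyer-Kan equivalence, the composite of $N_{\xi}$ with a known right Quillen equivalence from complete Segal spaces to simplicially enriched categories, in a manner compatible with the identification $\pi_{0}\LH(C,W) \cong C[W^{-1}]$ of \cite{dk-calculating}. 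Under this comparison an $N_{\xi}$-weak-equivalence in $\Relcat$ corresponds exactly to a Dwyer-Kan equivalence of hammock localizations, which is the asserted characterization.
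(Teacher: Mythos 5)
The paper does not actually prove this theorem: its ``proof'' is a one-line citation, stating that the result is a paraphrase of \cite[Theorem 6.1]{barwick-kan-model} together with \cite[Theorem 1.8]{barwick-kan-equiv}. What you have written is instead a reconstruction of the external argument, and as such it matches the actual strategy of Barwick--Kan reasonably well: the model structure is obtained by lifting Rezk's structure on $\sspc$ along the adjunction $K_{\xi} \dashv N_{\xi}$, with weak equivalences created by $N_{\xi}$ (so item (2) holds by fiat), cofibrations generated by $K_{\xi}$ of Rezk's generators plus a small number of extra maps, and the genuine content concentrated in the combinatorial analysis of the relative posets underlying $\xi$ that drives the acyclicity step and the Quillen-equivalence claim in item (1). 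You correctly identify this as the heart of the matter and correctly defer it to \cite{barwick-kan-model}, which is no worse than what the paper itself does.

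One caveat worth flagging: your treatment of item (3) reads as if the identification of $N_{\xi}$-weak-equivalences with Dwyer--Kan equivalences of hammock localizations were a formal consequence of lining up Quillen equivalences with the Bergner model structure \cite{bergner-model-structure}. It is not: the compatibility of $\LH$ (a construction defined by explicit hammocks, with its $\pi_{0}$-identification from \cite{dk-calculating}) with the zig-zag of Quillen equivalences through $\sspc$ is precisely the theorem of \cite{barwick-kan-equiv}, proved there via a characterization of simplicial localization functors; it is an entire companion paper, not a remark. Since you do cite \cite{barwick-kan-equiv} at that point, the proposal is not wrong, but the phrase ``which is the asserted characterization'' undersells where the work lives. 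A second minor quibble: limits and colimits in $\Relcat$ are both unproblematic (it is a presheaf-like category of pairs), so the asymmetry you describe between them is not where any difficulty arises.
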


\begin{proof}
  This is a simplified paraphrase of \cite[Theorem 6.1]{barwick-kan-model} and \cite[Theorem 1.8]{barwick-kan-equiv}.
\end{proof}

\begin{corollary}
  \label{cor:bk-equiv}
  The relative functors $U : (\ainfcat,\wainf) \rightleftarrows (\dgcat,\wdg) : i$ are both weak equivalences for the Barwick-Kan model structure on $\Relcat$. Also, the functors $N_{\xi}(U)$ and $N_{\xi}(i)$ are weak equivalences for Rezk's complete Segal space model structure on $\sspc$.
\end{corollary}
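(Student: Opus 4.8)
The plan is to deduce this immediately from Corollary \ref{cor:dk-equiv} together with Theorem \ref{thm:bk-main}. First I would invoke part (3) of Theorem \ref{thm:bk-main}, which characterizes the weak equivalences of the Barwick-Kan model structure on $\Relcat$: a relative functor $F : (C_{1},W_{1}) \to (C_{2},W_{2})$ is a Barwick-Kan weak equivalence if and only if $\LH F$ is a Dwyer-Kan equivalence of simplicially enriched categories. Applying this criterion to $F = U$ and to $F = i$, the assertion that $U$ and $i$ are Barwick-Kan weak equivalences becomes exactly the content of Corollary \ref{cor:dk-equiv}, namely that $\LH(U)$ and $\LH(i)$ are Dwyer-Kan equivalences. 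Recall that Corollary \ref{cor:dk-equiv} is in turn just the specialization of Proposition \ref{prop:dk-adjunction-implies-dk-equivalence} to the Dwyer-Kan adjunction of relative categories $(U,i,\eta,\epsilon)$ supplied by the restatement of Theorem \ref{thm:cos-main}, so no new input is needed.

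For the second assertion I would then appeal to part (2) of Theorem \ref{thm:bk-main}: a morphism in $\Relcat$ is a weak equivalence precisely when its image under $N_{\xi}$ is a weak equivalence in $\sspc$ for Rezk's complete Segal space model structure. Having already established that $U$ and $i$ are Barwick-Kan weak equivalences, it follows at once that $N_{\xi}(U)$ and $N_{\xi}(i)$ are weak equivalences of complete Segal spaces. (Alternatively, one could note that, since $K_{\xi} \dashv N_{\xi}$ is a Quillen equivalence by part (1), the right adjoint $N_{\xi}$ both preserves and reflects weak equivalences; but invoking (2) directly is the cleanest route.)

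The one point that needs attention, rather than being a genuine obstacle, is that $\ainfcat$ and $\dgcat$ are not small, whereas the Barwick-Kan model structure is set up on the category of \emph{small} relative categories. As already indicated in the introduction, this is handled by fixing a Grothendieck universe large enough that these categories become small; the model structure of Theorem \ref{thm:bk-main}, the hammock localization, and Proposition \ref{prop:dk-adjunction-implies-dk-equivalence} all make sense and retain the stated properties at every universe level, so the deduction goes through verbatim in the enlarged universe. Beyond this bookkeeping there is nothing further to prove.
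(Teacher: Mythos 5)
Your proof is correct and is exactly the argument the paper intends: the first assertion is Corollary \ref{cor:dk-equiv} combined with the characterization of Barwick--Kan weak equivalences in Theorem \ref{thm:bk-main}(3), and the second follows from Theorem \ref{thm:bk-main}(2). The remark on universes matches the paper's stated convention in the introduction, so nothing further is needed.
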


\section{Fibrant replacement, complete Segal spaces, and quasicategories}

So far we have considered relative categories and simplicially enriched categories as models for $(\infty,1)$-categories. It is now understood that many different models of $(\infty,1)$-categories are equivalent: see \cite{bergner-survey,bergner-book} for surveys of these relationships.

Two further models for $(\infty,1)$-categories are complete Segal spaces and quasicategories. The former has already made a passing appearance. The phrase ``Rezk's complete Segal space model structure'' refers to a model structure on $\sspc$, but a \emph{complete Segal space} is a fibrant object for this model structure. Similarly, there is a model structure on the category $\sset$ of simplicial sets called the \emph{Joyal model structure},  and a \emph{quasicategory} is a fibrant object for the Joyal model structure.

This means that in order to convert $(\dgcat,\wdg)$ and $(\ainfcat,\wainf)$ into complete Segal spaces or quasicategories, we must consider fibrant replacements. \textcolor{revision}{In general, if $\mathcal{M}$ is a model category and $X$ is an object of $\mathcal{M}$, we shall denote by $X^{\fib}$ a fibrant replacement of $X$, which is a fibrant object $X^{\fib}$ of $\mathcal{M}$ equipped with an acyclic cofibration $j_{X} : X \to X^{\fib}$. Given a morphism $\phi: X\to Y$ in $\mathcal{M}$, and fibrant replacements $j_{X}:X \to X^{\fib}$ and $j_{Y} : Y \to Y^{\fib}$, the composite $j_{Y}\phi$ is a morphism whose target object is fibrant. Therefore it extends along the acyclic cofibration $j_{X}$ to a morphism $\phi^{\fib} : X^{\fib} \to Y^{\fib}$ such that $\phi^{\fib}j_{X} = j_{Y}\phi$. The morphism $\phi^{\fib}$ is uniquely determined up to homotopy. If $\phi$ is a weak equivalence, then $\phi^{\fib}$ is as well, by the two-out-of-three property.}

A natural question is whether the objects of $\Relcat$ we are interested in, namely $(\dgcat,\wdg)$ and $(\ainfcat,\wainf)$, are already fibrant for the Barwick-Kan model structure on $\Relcat$. We have the following proposition.

\begin{proposition}
  The object $(\dgcat,\wdg)$ is a fibrant object in $\Relcat$.
\end{proposition}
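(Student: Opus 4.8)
The plan is to identify $(\dgcat,\wdg)$ with the relative category underlying a \emph{fibration category}, and then to invoke the theorem of Meier that the relative category underlying a fibration category is a fibrant object of the Barwick--Kan model structure on $\Relcat$. The one external input required is the model structure on $\dgcat$ constructed by Tabuada (the DG analogue of the Dwyer--Kan model structure on simplicially enriched categories), whose weak equivalences are exactly the quasi-equivalences, that is, the morphisms of $\wdg$.

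First I would recall this model structure and isolate the features we need. The category $\dgcat$ of small DG categories over $k$ is locally presentable, hence complete; Tabuada's model structure is cofibrantly generated, hence admits functorial factorizations; and its weak equivalences satisfy two-out-of-three (indeed two-out-of-six), being the weak equivalences of a model category. The crucial point is that \emph{every} object of $\dgcat$ is fibrant in this model structure. Indeed, a DG functor $F\colon \mathcal{A}\to\mathcal{B}$ is a fibration precisely when it is surjective on every hom-complex and satisfies a lifting condition for $H^{0}$-isomorphisms, and when $\mathcal{B}$ is the terminal DG category both conditions hold vacuously, using that in the projective model structure on complexes of $k$-modules every complex is fibrant (the fibrations being the degreewise surjections). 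It follows that $\dgcat$, equipped with its fibrations and its quasi-equivalences, satisfies the axioms of a fibration category: all objects are fibrant, fibrations and trivial fibrations are closed under composition and under pullback along fibrations, the weak equivalences satisfy two-out-of-three, and every morphism factors as a quasi-equivalence followed by a fibration. All of these are inherited from the model structure.

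Then I would conclude by applying Meier's theorem to this fibration category, which yields immediately that $(\dgcat,\wdg)$ is a fibrant object of $\Relcat$. I expect the only real work to be bookkeeping: one must check that the precise axiom system for ``fibration category'' used in the cited theorem (all finite limits versus merely finite products together with pullbacks along fibrations; two-out-of-three versus two-out-of-six; functorial versus bare factorizations) is met by $\dgcat$ with Tabuada's fibrations, the only genuinely model-structure-specific ingredient in that check being the observation that every DG category is fibrant. An alternative route---verifying directly that the relative nerve $N_{\xi}(\dgcat,\wdg)$ is a complete Segal space, i.e.\ is Reedy fibrant and satisfies the Segal and completeness conditions---is possible in principle but would require unwinding the functor $N_{\xi}$ explicitly and seems considerably more laborious; I would avoid it.
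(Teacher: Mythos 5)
Your proposal is correct and follows exactly the paper's route: invoke Tabuada's model structure on $\dgcat$ (in which all objects are fibrant, so it underlies a fibration category) and then apply Meier's main theorem. The paper's proof is just a terser citation of the same two ingredients.
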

\begin{proof}
  According to Tabuada \cite{tabuada}, $\dgcat$ itself admits the structure of a model category such that $\wdg$ is the class of weak equivalences. The desired conclusion then follows from a result of Meier \cite[Main Theorem]{meier}.
\end{proof}

\begin{remark}
  \label{rem:fibrancy}
  I was unable to find any result implying that $(\ainfcat,\wainf)$ is fibrant in $\Relcat$, but this may very well be true. See Section \ref{sec:fibrancy} for further discussion.
\end{remark}

\textcolor{revision}{In order to proceed, let us choose a fibrant replacement $j_{\ainf}: (\ainfcat,\wainf) \to (\ainfcat,\wainf)^{\fib}$ in $\Relcat$. As explained above, we obtain morphisms
  \begin{equation}
    U^{\fib} : (\ainfcat,\wainf)^{\fib} \rightleftarrows (\dgcat,\wdg) : i^{\fib}
  \end{equation}
  such that $i^{\fib} = j_{\ainf}i$ and $U^{\fib}j_{\ainf} = U$. Both $i^{\fib}$ and $U^{\fib}$ are weak equivalences between fibrant objects in $\Relcat$. }

Recall that a right Quillen functor (= right adjoint in a Quillen adjunction) always preserves fibrant objects and weak equivalences between fibrant objects.

\begin{corollary}
  \label{cor:css-equiv}
  Let $N_{\xi} : \Relcat \to \sspc$ be the right Quillen functor appearing in Theorem \ref{thm:bk-main}. Then $N_{\xi}(\dgcat,\wdg)$ and $N_{\xi}(\ainfcat,\wainf)^{\fib}$ are complete Segal spaces, \textcolor{revision}{and $N_{\xi}(i^{\fib})$ and $N_{\xi}(U^{\fib})$ are equivalences} between them.
\end{corollary}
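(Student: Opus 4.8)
The plan is to derive the statement purely formally from Theorem \ref{thm:bk-main} together with the fibrancy facts already in place, using only the standard behaviour of right Quillen functors on fibrant objects; no further input about DG or $\ainf$-categories should be needed.

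First I would recall that, by the definitions set up above, a complete Segal space is precisely a fibrant object of $\sspc$ for Rezk's model structure, and that any right Quillen functor carries fibrant objects to fibrant objects. By Theorem \ref{thm:bk-main}(1), $N_{\xi}$ is a right Quillen functor. By the Proposition preceding Remark \ref{rem:fibrancy}, $(\dgcat,\wdg)$ is fibrant in $\Relcat$; and $(\ainfcat,\wainf)^{\fib}$ is fibrant by construction, being a fibrant replacement. Hence $N_{\xi}(\dgcat,\wdg)$ and $N_{\xi}(\ainfcat,\wainf)^{\fib}$ are fibrant in $\sspc$, i.e.\ complete Segal spaces.

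Second, for the equivalence assertion I would invoke Ken Brown's lemma, that a right Quillen functor preserves weak equivalences between fibrant objects. As recorded in the displayed paragraph just before the corollary, $i^{\fib}$ and $U^{\fib}$ are weak equivalences in $\Relcat$ whose source and target are both fibrant. Applying $N_{\xi}$ therefore yields weak equivalences $N_{\xi}(i^{\fib})$ and $N_{\xi}(U^{\fib})$ in $\sspc$, i.e.\ between complete Segal spaces. Since every object of $\sspc$ is cofibrant for the Reedy model structure (hence for its left Bousfield localization, the complete Segal space model structure), these are weak equivalences between fibrant--cofibrant objects, which is the strongest reasonable notion of ``equivalence'' here; one can moreover check that the two maps are mutually homotopy-inverse, the relevant homotopies being induced by $\eta$, $\epsilon$, and $j_{\ainf}$.

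I do not expect a genuine obstacle: the content has already been packaged into Corollary \ref{cor:bk-equiv} and the fibrancy discussion, and the only point requiring any care is ensuring the hypotheses of ``preserves weak equivalences between fibrant objects'' are actually met. This is precisely why one passes to the fibrant replacement $(\ainfcat,\wainf)^{\fib}$ before applying $N_{\xi}$: a right Quillen functor need not be homotopically well behaved on non-fibrant objects, and the fibrancy of $(\ainfcat,\wainf)$ itself is not known (Remark \ref{rem:fibrancy}).
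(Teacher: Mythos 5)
Your proposal is correct and is essentially identical to the paper's (implicit) argument: the paper likewise records that a right Quillen functor preserves fibrant objects and weak equivalences between fibrant objects, and applies this to $N_{\xi}$ together with the fibrancy of $(\dgcat,\wdg)$ and of the fibrant replacement $(\ainfcat,\wainf)^{\fib}$ and the fact that $i^{\fib}$ and $U^{\fib}$ are weak equivalences between fibrant objects. The extra remarks on Reedy cofibrancy and homotopy inverses are harmless but not needed for the statement as given.
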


To pass to quasicategories we use the following result of Joyal-Tierney \cite{joyal-tierney}.
\begin{theorem}[\cite{joyal-tierney}]
  \label{thm:jt-main}
  There is a pair of adjoint functors $p_{1}^{*} : \sset \rightleftarrows \sspc : i_{1}^{*}$, with $p_{1}^{*}$ left adjoint to $i_{1}^{*}$, which is a Quillen equivalence between the Joyal model structure on $\sset$ and Rezk's complete Segal space model structure on $\sspc$.
\end{theorem}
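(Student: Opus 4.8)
The statement is a theorem of Joyal and Tierney \cite{joyal-tierney}, so in the body of the paper it suffices to cite it; what follows is a sketch of how one would prove it. The plan is first to make the adjunction explicit, then to establish the Quillen pair property, and finally to check the two conditions that upgrade a Quillen pair to a Quillen equivalence. Write an object of $\sspc$ as a simplicial space $[p]\mapsto W_{p}$ with $W_{p,q}=(W_{p})_{q}$. Then $i_{1}^{*}$ sends $W$ to the simplicial set $W_{\bullet,0}$ of $0$-simplices in each degree, and its left adjoint $p_{1}^{*}$ sends a simplicial set $A$ to the ``discrete simplicial space'' $(p,q)\mapsto A_{p}$, constant in the second direction. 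The first thing I would record is the identity $i_{1}^{*}p_{1}^{*}=\mathrm{id}_{\sset}$, so that the unit of the adjunction is an isomorphism and the real content lies entirely in the Quillen property and in the counit.

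Next I would verify that $(p_{1}^{*},i_{1}^{*})$ is a Quillen pair for the Joyal model structure on $\sset$ and Rezk's complete Segal space model structure on $\sspc$. Cofibrations on both sides are exactly the monomorphisms, and $p_{1}^{*}$ plainly preserves monomorphisms; the remaining point is that $p_{1}^{*}$ carries acyclic cofibrations of the Joyal model structure into acyclic cofibrations of Rezk's. This is the somewhat delicate part of the Quillen-pair verification, since $p_{1}^{*}$ is not left Quillen for any evident Reedy-type model structure on $\sspc$; it is done by working with a generating class of acyclic cofibrations for the Joyal model structure and checking $p_{1}^{*}$ on its members, the essential examples being that $p_{1}^{*}$ takes the spine inclusions to the localizing maps that produce Segal spaces, and takes the endpoint inclusion $\{0\}\hookrightarrow J$ (with $J$ the nerve of the walking isomorphism) to the completeness localizing map $F(0)\hookrightarrow E$ in Rezk's presentation.

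To promote the Quillen pair to a Quillen equivalence I would verify that the derived unit is a weak equivalence at every (cofibrant) simplicial set and the derived counit is a weak equivalence at every complete Segal space. For the derived unit, since $i_{1}^{*}p_{1}^{*}=\mathrm{id}$, the assertion is that for each $A$ the map $A\to i_{1}^{*}\big((p_{1}^{*}A)^{\fib}\big)$ is a weak categorical equivalence, i.e.\ that the complete Segal space generated by $A$ has underlying quasicategory the localization of $A$ in the Joyal model structure. For the derived counit, since every $i_{1}^{*}W$ is cofibrant, the assertion unwinds to: for every complete Segal space $W$ the canonical map $p_{1}^{*}(W_{\bullet,0})\to W$ is a weak equivalence in Rezk's model structure --- equivalently, a complete Segal space is recovered up to equivalence from its discretization $W_{\bullet,0}$. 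One also needs the lemma that $i_{1}^{*}W$ is a quasicategory whenever $W$ is a complete Segal space, which amounts to extracting the inner-horn-filling property of $W_{\bullet,0}$ from the Segal and completeness conditions on $W$.

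I expect the counit statement to be the main obstacle: it is not a formal manipulation, and it is precisely where Rezk's completeness axiom is indispensable \cite{rezk}. Completeness forces the degeneracy $W_{0}\to W_{1}$ to be a weak equivalence onto the subspace of homotopy equivalences inside $W_{1}$; granting this one checks that freely re-adding a degenerate second simplicial direction to $W_{\bullet,0}$ loses no homotopical information, which is exactly what makes the counit a weak equivalence in Rezk's model structure. A more modular route would be to deduce the theorem from the established chain of Quillen equivalences relating simplicially enriched categories, Segal categories, complete Segal spaces, and quasicategories, by identifying the derived functor of $p_{1}^{*}$ with the appropriate composite; but this relocates rather than removes the difficulty, since those comparisons rest on the same completeness facts.
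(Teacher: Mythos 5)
Your proposal takes essentially the same approach as the paper: Theorem \ref{thm:jt-main} is stated with no proof beyond the attribution to \cite{joyal-tierney}, and you correctly identify that a citation is all that is required in this context. Your supplementary sketch is a faithful outline of the cited result---the functors $p_{1}^{*}$ and $i_{1}^{*}$ are correctly described, the identity $i_{1}^{*}p_{1}^{*}=\mathrm{id}$ holds, and you isolate the genuinely nontrivial inputs (that $i_{1}^{*}$ of a complete Segal space is a quasicategory, and that the counit is an equivalence on complete Segal spaces, which is where completeness is used)---so no gap needs to be flagged.
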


By composing the Quillen equivalence in Theorem \ref{thm:jt-main} with the one in Theorem \ref{thm:bk-main}, we obtain a Quillen equivalence between $\Relcat$ with the Barwick-Kan model structure and $\sset$ with the Joyal model structure (such that $\Relcat$ is the domain of the right adjoint). This equivalence was also explicitly described by Barwick-Kan \cite{barwick-kan-thomason}.

\begin{corollary}
  \label{cor:dg-vs-ainf-quasicat}
  The simpicial sets $i_{1}^{*}N_{\xi}(\dgcat,\wdg)$ and $i_{1}^{*}N_{\xi}(\ainfcat,\wainf)^{\fib}$ are quasicategories, \textcolor{revision}{and $i_{1}^{*}N_{\xi}(i^{\fib})$ and $i_{1}^{*}N_{\xi}(U^{\fib})$ are equivalences} between them.
\end{corollary}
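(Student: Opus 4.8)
The plan is to observe that $i_{1}^{*}N_{\xi} : \Relcat \to \sset$ is a right Quillen functor from the Barwick-Kan model structure to the Joyal model structure, and then simply to apply it to the data produced in Corollary \ref{cor:css-equiv}, using the standard fact that a right Quillen functor preserves fibrant objects and weak equivalences between fibrant objects.

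First I would note that $N_{\xi} : \Relcat \to \sspc$ is right Quillen by Theorem \ref{thm:bk-main}, and $i_{1}^{*} : \sspc \to \sset$ is right Quillen by Theorem \ref{thm:jt-main} (it is the right adjoint in a Quillen equivalence). A composite of right adjoints of Quillen adjunctions is again the right adjoint of a Quillen adjunction, so $i_{1}^{*}N_{\xi}$ is right Quillen. In particular it carries fibrant objects of $\Relcat$ to fibrant objects of the Joyal model structure, i.e.\ quasicategories, and weak equivalences between fibrant objects of $\Relcat$ to weak equivalences between quasicategories, i.e.\ equivalences of quasicategories.

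Next I would recall the relevant fibrancy and weak-equivalence data in $\Relcat$: the object $(\dgcat,\wdg)$ is fibrant by the Proposition above, the object $(\ainfcat,\wainf)^{\fib}$ is fibrant by construction of the fibrant replacement $j_{\ainf}$, and $i^{\fib}$ and $U^{\fib}$ are weak equivalences between these fibrant objects, as established in the paragraph preceding Corollary \ref{cor:css-equiv}. Applying the right Quillen functor $i_{1}^{*}N_{\xi}$ to this data yields precisely the assertion of the corollary. Equivalently, one may instead apply the single right Quillen functor $i_{1}^{*}$ to the conclusions of Corollary \ref{cor:css-equiv}, which already identifies $N_{\xi}(\dgcat,\wdg)$ and $N_{\xi}(\ainfcat,\wainf)^{\fib}$ as complete Segal spaces (fibrant objects of $\sspc$) and $N_{\xi}(i^{\fib})$, $N_{\xi}(U^{\fib})$ as equivalences between them.

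No step here presents a genuine obstacle: the statement is a formal consequence of Corollary \ref{cor:css-equiv} together with the fact that $i_{1}^{*}$ is right Quillen. The only slightly delicate points — fibrancy of $(\ainfcat,\wainf)^{\fib}$ in $\Relcat$ and the fact that $i^{\fib}$ and $U^{\fib}$ are weak equivalences between fibrant objects — have already been arranged in the construction of the fibrant replacement, and the identification of Joyal-fibrant simplicial sets with quasicategories is the definition recalled in the text above.
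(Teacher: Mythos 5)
Your proof is correct and matches the paper's (implicit) argument exactly: the paper composes the Quillen equivalences of Theorems \ref{thm:bk-main} and \ref{thm:jt-main} and then invokes the same standard fact that a right Quillen functor preserves fibrant objects and weak equivalences between fibrant objects, applied to $(\dgcat,\wdg)$, $(\ainfcat,\wainf)^{\fib}$, $i^{\fib}$, and $U^{\fib}$. Nothing is missing.
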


Let us now summarize what we have achieved. Corollary \ref{cor:dg-vs-ainf-quasicat} is a lift of Theorem \ref{thm:1cat-equiv} to the level of quasicategories. Indeed, all of the functors that we are applying preserve the homotopy category $C[W^{-1}]$ of a relative category $(C,W)$. The homotopy category of the quasicategory $i_{1}^{*}N_{\xi}(\dgcat,\wdg)$ is $\dgcat[(\wdg)^{-1}]$, and likewise the homotopy category of $i_{1}^{*}N_{\xi}(\ainfcat,\wainf)^{\fib}$ is $\ainfcat[(\wainf)^{-1}]$. So the equivalence of categories in Theorem \ref{thm:1cat-equiv} follows from Corollary \ref{cor:dg-vs-ainf-quasicat} by taking the homotopy category.

\section{Other quasicategory models}

Since all models of $(\infty,1)$-categories are related, sometimes in several ways, there are other ``paths'' from $\Relcat$ to the category of quasicategories.

One path goes through the simplicial localization. Given a relative category $(C,W)$, we form $\LH(C,W)$. This may not be fibrant (the morphism simplicial sets may not be Kan complexes), so we should take a fibrant replacement $(\LH(C,W))^{\fib}$ for the Bergner model structure.\footnote{Two explicit constructions are given by applying to each morphism space either $\mathrm{Sing}| \cdot |$ or Kan's $\mathrm{Ex}^{\infty}$ functor.} Then we apply the homotopy coherent nerve $N_{c}$. Again this is a right Quillen functor in a Quillen equivalence between the Bergner model structure on simplicially enriched categories and the Joyal model structure on $\sset$, so $N_{c}(\LH(C,W))^{\fib}$ is a quasicategory.

\textcolor{revision}{We remark that the} quasicategory $N_{c}(\LH(C,W))^{\fib}$ is equivalent to $i_{1}^{*}N_{\xi}(C,W)^{\fib}$, the version considered previously \cite[p. 3277]{meier}, \cite{csp-mathoverflow}.

\textcolor{revision}{From Corollary \ref{cor:dk-equiv}, we have a pair of Dwyer-Kan equivalences $\LH(i)$ and $\LH(U)$ relating the simplicial localizations $\LH(\dgcat,\wdg)$ and $\LH(\ainfcat,\wainf)$. After fibrant replacement, we obtain
  \begin{equation}
    \LH(U)^{\fib} : \LH(\ainfcat,\wainf)^{\fib} \rightleftarrows \LH(\dgcat,\wdg)^{\fib} : \LH(i)^{\fib},
  \end{equation}
  where both morphisms are again weak equivalences for the Bergner model structure.}

\begin{corollary}
  \label{cor:coherent-nerve-equiv}
  \textcolor{revision}{The maps $N_{c}\LH(i)^{\fib}$ and $N_{c}\LH(U)^{\fib}$ are equivalences between the quasicategories $N_{c}(\LH(\dgcat,\wdg))^{\fib}$ and $N_{c}(\LH(\ainfcat,\wainf))^{\fib}$.}
\end{corollary}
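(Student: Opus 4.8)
The plan is to repeat, along the simplicial-localization route, exactly the argument that produced Corollaries~\ref{cor:css-equiv} and~\ref{cor:dg-vs-ainf-quasicat}: apply a right Quillen functor to weak equivalences between fibrant objects. All of the inputs are already in hand. By Corollary~\ref{cor:dk-equiv}, $\LH(i)$ and $\LH(U)$ are Dwyer--Kan equivalences, hence weak equivalences for the Bergner model structure on simplicially enriched categories. The fibrant-replacement discussion immediately preceding the statement supplies fibrant objects $\LH(\dgcat,\wdg)^{\fib}$ and $\LH(\ainfcat,\wainf)^{\fib}$ together with maps $\LH(i)^{\fib}$ and $\LH(U)^{\fib}$ between them that are again Bergner weak equivalences (by two-out-of-three). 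So the only additional ingredient needed is the behaviour of the homotopy coherent nerve.

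First I would recall that the homotopy coherent nerve $N_{c}$ is the right adjoint in a Quillen equivalence between the Bergner model structure on simplicially enriched categories and the Joyal model structure on $\sset$, citing the standard reference for this fact. Second, I would invoke the elementary and by-now familiar principle that a right Quillen functor carries fibrant objects to fibrant objects and weak equivalences between fibrant objects to weak equivalences. Combining the two: since $\LH(\dgcat,\wdg)^{\fib}$ and $\LH(\ainfcat,\wainf)^{\fib}$ are Bergner-fibrant, the simplicial sets $N_{c}(\LH(\dgcat,\wdg))^{\fib}$ and $N_{c}(\LH(\ainfcat,\wainf))^{\fib}$ are Joyal-fibrant, i.e.\ quasicategories; and since $\LH(i)^{\fib}$ and $\LH(U)^{\fib}$ are Bergner weak equivalences between fibrant objects, the maps $N_{c}\LH(i)^{\fib}$ and $N_{c}\LH(U)^{\fib}$ are weak equivalences for the Joyal model structure, which between quasicategories is precisely the assertion that they are equivalences. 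This completes the argument.

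I do not expect any genuine obstacle here; the whole content is bookkeeping that has already been arranged. The only two points deserving a line of verification are that the chosen fibrant replacements are honestly fibrant (so that applying $N_{c}$ really lands among quasicategories) and that $\LH(i)^{\fib}$ and $\LH(U)^{\fib}$ are maps between those very objects — both guaranteed by the construction recalled just before the statement. If a two-sided formulation is wanted, one can further note that $N_{c}\LH(i)^{\fib}$ and $N_{c}\LH(U)^{\fib}$ are mutually homotopy-inverse: $\LH(i)$ and $\LH(U)$, being the Dwyer--Kan equivalences induced from the Dwyer--Kan adjunction $(U,i,\eta,\epsilon)$ (compare the remark following Proposition~\ref{prop:dk-adjunction-implies-dk-equivalence}), become mutually inverse isomorphisms in the homotopy category of the Bergner model structure, and $N_{c}$ descends to homotopy categories; this last refinement is optional and is not needed for the statement of Corollary~\ref{cor:coherent-nerve-equiv} as written.
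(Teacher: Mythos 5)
Your argument is exactly the one the paper intends: the preceding discussion supplies the Bergner weak equivalences $\LH(i)^{\fib}$ and $\LH(U)^{\fib}$ between fibrant objects, and the corollary follows by applying the right Quillen functor $N_{c}$, which preserves fibrant objects and weak equivalences between them. This matches the paper's route, so nothing further is needed.
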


It is also possible to take a relative category $(C,W)$ and directly produce a quasicategorical localization by localizing the nerve $N(C)$ with respect to the collection of edges $W$. Let us denote such a localization by $N(C)_{(W)}$. An explicit construction shown to me by Rezk consists of first taking the pushout of the diagram of simplicial sets
\begin{equation}
  \coprod_{w \in W} J \leftarrow \coprod_{w \in W} \Delta^{1} \rightarrow N(C),
\end{equation}
where $J$ is the walking isomorphism, and then taking fibrant replacement for the Joyal model structure. However, the object $N(C)_{(W)}$ is really intended to be characterized by a quasicategorical universal property, \textcolor{revision}{namely: for any quasicategory $D$, the map $\Fun(N(C)_{(W)},D) \to \Fun(N(C),D)$ induced by $N(C)\to N(C)_{(W)}$ is an equivalence of quasicategories from $\Fun(N(C)_{(W)},D)$ to the full sub-quasicategory of $\Fun(N(C),D)$ spanned by functors that take edges in $W$ to equivalences in $D$.}

\textcolor{revision}{Given a relative functor $F : (C_{1},W_{1}) \to (C_{2},W_{2})$, we obtain a map of nerves $N(F) : N(C_{1})\to N(C_{2}).$ Since $F(W_{1})\subseteq W_{2}$, this extends to a map of localizations $N(F)_{\mathrm{loc}} : N(C_{1})_{(W_{1})} \to N(C_{2})_{(W_{2})}$.}

\textcolor{revision}{The results of Hinich \cite{hinich, marino-mathoverflow} show that the previously considered versions $i_{1}^{*}N_{\xi}(C,W)^{\fib}$ and $N_{c}(\LH(C,W))^{\fib}$ are equivalent to $N(C)_{(W)}$. More precisely, given a relative category $(C,W)$, there is map of simplicial sets $N(C) \to N_{c}(\LH(C,W))^{\fib}$, which extends to a map $N(C)_{(W)} \to N_{c}(\LH(C,W))^{\fib}$, and this latter map is an equivalence of quasicategories. These equivalences are natural with respect to relative functors.}

\begin{corollary}
  \label{cor:qcat-localization-equiv}
  \textcolor{revision}{The maps $N(i)_{\mathrm{loc}}$ and $N(U)_{\mathrm{loc}}$ are equivalences between the quasicategorical localizations $N(\dgcat)_{(\wdg)}$ and $N(\ainfcat)_{(\wainf)}$}.
\end{corollary}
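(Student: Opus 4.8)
The plan is to deduce Corollary \ref{cor:qcat-localization-equiv} by transporting the equivalence of Corollary \ref{cor:coherent-nerve-equiv} (or equivalently Corollary \ref{cor:dg-vs-ainf-quasicat}) along the naturality statement recorded in the paragraph immediately preceding the corollary. First I would invoke the cited results of Hinich \cite{hinich, marino-mathoverflow}: for any relative category $(C,W)$ there is a natural-in-relative-functors equivalence of quasicategories $e_{(C,W)} : N(C)_{(W)} \xrightarrow{\sim} N_{c}(\LH(C,W))^{\fib}$, compatible with the structure maps $N(C) \to N(C)_{(W)}$ and $N(C) \to N_{c}(\LH(C,W))^{\fib}$. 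Applying this to the two relative categories in play yields equivalences $e_{\dgcat}$ and $e_{\ainfcat}$.

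Next I would assemble the naturality square. Since $U : (\ainfcat,\wainf) \to (\dgcat,\wdg)$ and $i : (\dgcat,\wdg) \to (\ainfcat,\wainf)$ are relative functors, naturality of $e$ gives commuting squares
\begin{equation*}
  \begin{array}{ccc}
    N(\ainfcat)_{(\wainf)} & \xrightarrow{\ N(U)_{\mathrm{loc}}\ } & N(\dgcat)_{(\wdg)}\\
    \ \ \downarrow{e_{\ainfcat}} & & \downarrow{e_{\dgcat}}\ \ \\
    N_{c}(\LH(\ainfcat,\wainf))^{\fib} & \xrightarrow{\ N_{c}\LH(U)^{\fib}\ } & N_{c}(\LH(\dgcat,\wdg))^{\fib}
  \end{array}
\end{equation*}
and the analogous square with $i$ in place of $U$. (To be fully careful, the map $N_{c}\LH(U)^{\fib}$ here is the one induced by a choice of fibrant replacements as in the paragraph before Corollary \ref{cor:coherent-nerve-equiv}; the functoriality of $\LH$ together with uniqueness up to homotopy of fibrant replacements makes this square commute up to equivalence, which is all we need.) By Corollary \ref{cor:coherent-nerve-equiv} the bottom horizontal maps are equivalences of quasicategories, and the vertical maps are equivalences by Hinich. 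By the two-out-of-three property for equivalences of quasicategories, $N(U)_{\mathrm{loc}}$ and $N(i)_{\mathrm{loc}}$ are equivalences. Finally, since $\epsilon : Ui \to \id_{\dgcat}$ and $\eta : \id_{\ainfcat} \to iU$ have all components in the respective weak equivalences, the composites $N(U)_{\mathrm{loc}} N(i)_{\mathrm{loc}}$ and $N(i)_{\mathrm{loc}} N(U)_{\mathrm{loc}}$ are equivalent to the respective identities, so the two maps are mutually quasi-inverse; but this is already a formal consequence of either one being an equivalence.

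The only subtle point — and the step I expect to require the most care rather than genuine difficulty — is checking that the naturality square actually commutes (up to the appropriate notion of homotopy of maps of quasicategories), given that $N(\,\cdot\,)_{(W)}$, fibrant replacement, and $\LH$ each involve choices. This is handled by appealing to the stated naturality in \cite{hinich, marino-mathoverflow} for the comparison $N(C) \to N_{c}(\LH(C,W))^{\fib}$ before fibrant replacement, extending it over the localization by the universal property of $N(C)_{(W)}$, and noting that any two maps out of $N(C)_{(W)}$ agreeing after precomposition with $N(C) \to N(C)_{(W)}$ and landing in a quasicategory are equivalent (again by that universal property). With the square in hand, everything else is the two-out-of-three property, exactly as in the proofs of the earlier corollaries.
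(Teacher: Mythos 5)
Your proposal is correct and follows essentially the same route as the paper: the paper gives no separate proof for this corollary, relying precisely on the preceding paragraph's assertion that the Hinich comparison $N(C)_{(W)} \to N_{c}(\LH(C,W))^{\fib}$ is an equivalence natural in relative functors, so that Corollary \ref{cor:coherent-nerve-equiv} transfers by two-out-of-three. Your additional care about why the naturality square commutes up to homotopy (via the universal property of $N(C)_{(W)}$ and uniqueness of fibrant replacements) is a welcome elaboration of what the paper leaves implicit, not a different argument.
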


% \textcolor{revision}{\begin{remark}
%     There is a remaining question of whether the maps induced by $U$ and $i$ are inverse to each other. Our definition of a Dwyer-Kan adjunction of relative categories (Definition \ref{defn:dk-adjunction}) implies that $\eta : \id_{C} \to RL$ and $\epsilon : LR \to \id_{D}$ are \emph{strict homotopies} in $\Relcat$ in the sense of \cite[\S 3.3]{barwick-kan-model}, and hence that $L$ and $R$ form a \emph{(strict) homotopy equivalence} in $\Relcat$. Proposition 7.5 of \cite{barwick-kan-model} implies that the functor $N_{\xi} : \Relcat \to \sspc$ takes homotopy equivalences in $\Relcat$ to homotopy equivalences in $\sspc$ in the sense of \cite[\S 7.4]{barwick-kan-model}. This implies that $N_{\xi}(L)$ and $N_{\xi}(R)$ are inverses in the the homotopy category of $\sspc$ with respect to the Reedy model structure, and hence with respect to the Rezk model structure. Since the various Quillen equivalences we have used all preserve the homotopy category up to equivalence. 
% \end{remark}}

\section{The problem of fibrancy of $(\ainfcat,\wainf)$}
\label{sec:fibrancy}

We conclude with a few remarks about the problem raised in Remark \ref{rem:fibrancy}, namely the question of whether $(\ainfcat,\wainf)$ is a fibrant object in $\Relcat$. If this were true, it would slightly simplify some of the preceding constructions.

The main result of Meier \cite{meier} is that, in order for $(C,W)$ to be fibrant in $\Relcat$, it is sufficient that $(C,W)$ admits the structure of a \emph{fibration category}. This means that in addition to the class of weak equivalences $W$, one must also specify a subcategory $\mathrm{Fib} \subset C$ of \emph{fibrations}, such that a certain list of axioms is satisfied. See \cite[Definition 3.1]{meier} for the precise definition.

So in order to show that $(\ainfcat,\wainf)$ is fibrant, we would need a notion of fibration of $\ainf$-categories. A somewhat naive guess is the following: An $\ainf$-functor $F: A \to B$ between $\ainf$-categories is said to be a \emph{fibration} if the following conditions are satisfied:
\begin{enumerate}
\item for every pair of objects $X,Y$ in $A$ and each integer $p$, the first component $F^{1}: \hom_{A}^{p}(X,Y) \to \hom_{B}^{p}(FX,FY)$ is a surjective map of vector spaces, and
\item $H^{0}(F): H^{0}(A) \to H^{0}(B)$ is an isofibration.
\end{enumerate}
We recall that an \emph{isofibration} is a functor $F: C \to D$ between ordinary categories with the following property:
\begin{itemize}
\item If $\phi : F(X) \to Z$ is an isomorphism in $D$ whose source object is in the image of $F$, then there is an isomorphism $\tilde{\phi} : X \to \tilde{Z}$ in $C$ such that $F\tilde{\phi} = \phi$.
\end{itemize}
This definition of fibration of $\ainf$-categories is directly analogous both to the definition of fibration of simplicially enriched categories for the Bergner model structure, and to the definition of fibration of DG categories for the Tabuada model structure.\footnote{Even so, other definitions of ``fibration of $\ainf$-categories'' are possible. A less naive proposal is to take from \cite{lefevre} the fibrations of conilpotent DG coalgebras, generalize that concept to DG cocategories somehow, and then take the preimage of that class under the bar construction.}

The problem is now to verify the axioms (F1)--(F5) of \cite[Definition 3.1]{meier} using this notion of fibration. Axioms (F1), (F2), and (F3) are straightforward (note that $W$ and $\mathrm{Fib}$ both contain all isomorphisms and that all objects are fibrant). The factorization axiom (F5) follows from axiom (F4) and the existence of path objects in $\ainfcat$ \cite[Appendix D]{drinfeld-dg}.

The trickiest axiom turns out to (F4); in the present context, it is equivalent to the assertion that the pullback of a fibration of $\ainf$-categories along any $\ainf$-functor exists and is a fibration, and also that the pullback of an acyclic fibration (fibration which is also a weak equivalence) is an acyclic fibration. I do not know any proof of this assertion.

In fact the mere existence of such pullbacks is a nontrivial problem, because the category $\ainfcat$ is \textbf{not complete}. Indeed, the category $\ainfalg$ of $\ainf$-algebras ($\ainf$-categories with a single object) is not complete either, as was already well-appreciated by Lef\`{e}vre-Hasegawa \cite{lefevre}. See \cite[\S 1.5]{COS} for a simple counterexample. The reason is elementary: an $\ainf$-functor or $\ainf$-morphism of $\ainf$-algebras is precisely not ``a map of underlying vector spaces that preserves the operations''---it only preserves the operations up to specified homotopy. This means that the naive attempt to form the equalizer of a parallel pair of $\ainf$-morphisms $F,G : A \to B$---take the equalizer of $F^{1}$ and $G^{1}$ in vector spaces and then define $\ainf$-operations on the result---fails catastrophically. Instead, one must take the equalizer in the category of DG cocategories or DG coalgebras, and then prove that the resulting object lies in the essential image of the bar construction.

In the case of $\ainfalg$, one of the main results of \cite{lefevre} is that the category of conilpotent DG coalgebras admits a model structure such that the subcategory of fibrant objects is equivalent to $\ainfalg$. As explained in \cite{vallette}, it follows that $\ainfalg$ is a ``category of fibrant objects,'' implying that it is indeed a fibration category in the sense of \cite{meier}, where the weak equivalences are the $\ainf$-quasi-isomorphisms and the fibrations are degreewise surjections.

There is one more case that deserves mention. Let $O$ be set, and let $\ainfcat_{O}$ be the category whose objects are small $\ainf$-categories with object set $O$, and whose morphisms are $\ainf$-functors that are identity on $O$. As stated in \cite{COS}, the results of \cite{lefevre} apply \emph{mutatis mutandis} to this case. Thus the problem of extending the results of \cite{lefevre} to $\ainfcat$ is the simple fact that a functor, even if it is an equivalence, need not be a bijection on objects.

% We recall the definition
% \begin{definition}[\cite{meier}, Definition 3.1]
%   A \emph{fibration category} is a relative category $(C,W)$ together with a subcategory $F \subset C$ of \emph{fibrations}, such that the following conditions are satisfied:
%   \begin{enumerate}
%   \item[(F1)] $C$ has a terminal object $*$. An object $X$ of $C$ is called \emph{fibrant} if $X \to *$ is a fibration.
%   \item[(F2)] Every isomorphism is a weak equivalence, and every isomorphism whose codomain is fibrant is a fibration.
%   \item[(F3)] $W$ satisfies the two-out-of-six property: If $f, g, h$ are composable, and $gf$ and $hg$ are weak equivalences, then $f$, $g$, and $h$ are weak equivalences.
%   \item[(F4)] Let $f : A \to C$ be a morphism between fibrant objects. If $p : B \to C$ is a fibration, 
%   \item[(F5)] Any morphism $f : A \to C$ with $C$ fibrant factors as 
%   \end{enumerate}
% \end{definition}

\bibliographystyle{alpha}
\bibliography{dg-versus-a-infinity}

\end{document}